\newtheorem{theorem}{Theorem}
\newtheorem{remark}{Remark}
\newtheorem{prop}{Proposition}
\newcommand{\D}{\mathbb{D}}
\newcommand{\E}{\mathsf{E}}
\newcommand{\Prob}{\mathsf{P}}
\begin{document}
\begin{center}
{\Large\bf Generalized fractional calculus and some models of generalized counting processes}
 \end{center}

\begin{center}
{{\bf Khrystyna Buchak}}
\end{center}
\begin{center}
 {\it Faculty of Mathematics and Digital Technologies\\ 
	Uzhgorod National University,  \\Narodna Square 3,
	88000 Uzhgorod, Ukraine}
\end{center}

\begin{center}
{{\bf Lyudmyla Sakhno}}
\end{center}
\begin{center}
 {\it Faculty of Mechanics and Mathematics\\ Taras Shevchenko National University of Kyiv,\\ Volodymyrska St. 64,
	01601 Kyiv, Ukraine}
\end{center}

\begin{center}
{\small khrystyna.buchak@uzhnu.edu.ua (K.~Buchak),\\lyudmyla.sakhno@knu.ua (L.~Sakhno)} 
\end{center}

\medskip

\begin{abstract}
{In the paper we consider models of generalized counting processes time-changed by a general inverse subordinator, we characterize their distributions and present  governing equations for them. The equations are given in terms of the generalized fractional derivatives, namely, convolutions-type derivatives with respect to Bern\v{s}tein functions. Some particular examples are presented.}

{\bf Key words:} time-change, Poisson process,
generalized counting process, subordinator, inverse subordinator, generalized fractional derivatives

{\bf 2010 MSC:} 60G50; 60G51; 60G55
\end{abstract}

\bigskip

\section{Introduction}

Stochastic processes with random time-change has become a well-established and highly ramified branch of the modern theory of stochastic processes which attracts more and more attention due to various applications in financial, biological, ecological, physical, technical and other fields of research. 

Rich class of models is provided by time-changed Poisson processes, of which the most intensively studied are two fractional extensions of the Poisson process, namely, the space-fractional and time-fractional Poisson processes obtained by choosing a stable subordinator or its inverse in the role of time correspondingly. We refer, for example, to papers  \cite{GOS}, \cite{OP}, \cite{OT}, \cite{BDOV}, \cite{BO}, \cite{LST},  \cite{OP2013}, among many others (see also references therein). In particular, in paper \cite{OT} a general class of time-changed Poisson processes $N^f(t)=N(H^{f}(t))$, $t>0$,  was introduced and studied, where $
N(t)$ is a Poisson process and $H^{f}(t)$ is an arbitrary subordinator with
Laplace exponent $f,$ independent of $N(t)$, their distributional properties, hitting times and governing equations were presented (see, also \cite{GOS}). In paper \cite{B} Poisson processes time changed by general inverse subordinators were studied, the governing equations for their marginal distributions were presented and some other properties were described. Poisson process itself, being in a sense a core object concerning applicability to count data and simple tractability, however, as a reverse side of its simplicity, is a rather restrictive model. Therefore, it has been quite natural to 
search for its extensions and generalizations  
to provide some new useful features and  properties needed for applications. For example, time changed processes $N(H^{f}(t))$ allow for jumps of arbitrary size and 
other interesting properties (\cite{OT}, \cite{GOS}).

Recently, generalized counting processes, their time-changed versions and fractional extensions have been intensively investigated, including Poisson and Skellam processes of order $k$, P\'olya-Aeppli process of order $k$, Bell-Touchard process, Poisson-logarithmic process, etc. and their fractional extensions. We refer  to papers \cite{DiC}, \cite{KM}, \cite{CM}, \cite{KLS}, \cite{KK2022a}, \cite{KK2022b}, \cite{KK2022Arxiv}, to mention only few, see also references therein.

In the present paper we consider several particular models of generalized counting processes time-changed by a general inverse subordinator, characterize their distributions and present  governing equations for them, which are obtained following the technique  presented in \cite{B}. The equations are given in terms of the generalized Caputo-Djrbashian derivatives, which called also convolutions-type derivatives with respect to Bern\v{s}tein functions. These generalized derivatives were introduced in \cite{K} and \cite{T} and have been widely used to describe various advanced stochastic models.  

The convolution-type derivatives allow to study  the properties of subordinators and their inverses in the unifying manner (\cite{T, MT}), in particular, the governing equations for their densities can be given in terms of   convolution-type derivatives. These properties will be the key tools for our study: by considering several models of time-changed processes, we elucidate the technique leading from the equations for the densities of inverse subordinators and their Laplace transforms to the equations for probabilities of  processes time-changed by inverse subordinators and equations for some related functions. 

The paper is organized as follows. Section 2 collects the basic definitions and facts on the  convolution-type derivatives needed in our study. We also consider, as a warmup example, the time changed Poisson process $N(Y^{f}(t))$,$t>0$,  where $Y^{f}$ is the inverse to the subordinator with
Laplace exponent $f,$ independent of $N(t)$. We partly extend the results from \cite{B} for this process. This   simplest case allows to demonstrate very transparently the technique  underpinning the derivations of the corresponding results for more general models considered in the next sections. Namely,  in Sections 3 and 4 we study 
the processes $N^{\psi, f}(t) = N\left(H^\psi(Y^f(t))\right)$, $t>0$, and $M^{\psi, f}(t) = M\left(H^\psi(Y^f(t))\right)$, $t>0$, where $M$ is the generalized counting process. Time change is done by the independent subordinator $H^\psi$ and inverse subordinator $Y^f$ related to Bern\v{s}tein functions $\psi$ and $f$ correspondingly. We present the differential-difference equations for probabilities of the processes $N^{\psi, f}$ and $M^{\psi, f}$, which are given in terms of the convolutions-type derivative with respect to function $f$ and the difference operator related to function $\psi$. Expressions for the probabilities of these processes are also presented in terms of the Laplace transform of the process $Y^f$. Some other properties of these processes are described and a comparison with related results in the literature are given. As examples, we consider the processes $N^{\psi, f}$ and $M^{\psi, f}$, where $H^\psi$ is a compound Poisson-Gamma process.

\section{Preliminaries}

{\bf Generalized fractional derivatives.} 
We review the main definitions and some  facts on the generalized fractional derivatives, which will be important for our study. (For more details see, e.g., \cite{MT, T}.) 

Let $f(x)$ be a Bern\v{s}tein function:
\begin{equation}\label{fx}
f(x)=a+bx+\int_{0}^{\infty}\left(1-e^{-xs}\right)\overline{\nu}(ds), \,\,x>0, \,\,a, b\geq0, 
\end{equation}
 $\overline{\nu}(ds)$ is a non-negative measure on $(0,\infty)$ (the L\'evy measure for $f(x)$) such that
\begin{equation*}
\int_{0}^{\infty}\left(s\wedge 1\right)\overline{\nu}(ds)<\infty.
\end{equation*}
The generalized Caputo-Djrbashian (C-D) derivative, or convolution-type derivative, with respect to the Bern\v{s}tein function $f$ is defined on the space of absolutely continuous functions as follows (\cite{T}, Definition 2.4):
\begin{equation}\label{fDt}
\mathcal{D}^f_t u(t)=b \frac{d}{dt}u(t)+\int_{0}^{t}\frac{\partial }{\partial t }u(t-s)\nu (s)ds,
\end{equation}
where $\nu (s)=a+\overline{\nu}(s,\infty)$ is the tail of the L\'evy measure $\overline{\nu}(s)$ of the function~$f$.

In the case where $f(x)=x^{\alpha},x>0, \alpha\in (0,1)$, the derivative \eqref{fDt} reduces to the fractional C-D
derivative:
\begin{equation}\label{C-D}
\mathcal{D}^f_t u(t)=\frac{d^{\alpha}}{dt^{\alpha}}u(t)=\frac{1}{\Gamma(1-\alpha)}\int_{0}^{t}\frac{u^\prime(s)}{\left(t-s\right)^{\alpha}}ds.
\end{equation}

 For the Laplace transform of the derivative \eqref{fDt} the following relation holds (\cite{T}, Lemma 2.5):
\begin{equation*}\label{LfDt}
\mathcal{L}\left[\mathcal{D}^f_t u\right](s)=f(s)\mathcal{L}\left[u\right](s)-\frac{f(s)}{s}u(0), s>s_0,
\end{equation*}
for $u$ such that $|u(t)|\leq\mathsf{M} e^{s_0t}$, $M$ and $s_0$ are some constants. Similarly to the C-D fractional derivative, the convolution type derivative can be  alternatively defined via its Laplace transform.

The generalization of the classical Riemann-Liouville (R-L) fractional derivative is introduced in \cite{T} by means of another convolution-type derivative with respect to $f$ given by the following formula:
\begin{equation}\label{fDDt}
\mathbb{D}_t^f u(t)=b \frac{d}{dt}u(t)+\frac{d}{dt}\int_{0}^{t}u(t-s)\nu (s)ds.
\end{equation}
The derivatives $\mathcal{D}^f_t$ and $\mathbb{D}_t^f$ are related as follows (see, \cite{T}, Proposition 2.7):
\begin{equation*}\label{fDtrelation}
\mathbb{D}_t^f u(t)=\mathcal{D}^f_t u(t) +\nu (t)u(0).
\end{equation*}

Bern\v{s}tein functions are associated in a natural way with subordinators.

Let $H(t), t\geq0,$ be a subordinator, that is, nondecreasing L\'evy process. Its Laplace transform is of the form: 
$$\mathcal{L}[H(t)](s)=\E e^{-sH(t)}= e^{-t f(s)},$$
 where the function $f$, called the Laplace exponent, is a Bern\v{s}tein function. Consider a subordinator $H^f$ with the Laplace exponent $f$ given by \eqref{fx}, and let $Y^f$ be its inverse process defined as
\begin{equation}\label{Yf}
Y^f(t)=\inf \left\{s\geq0:H^f(s)>t\right\}.
\end{equation}
It was shown in \cite{T} that the distribution of the inverse process $Y^f$ has a density 
$\ell_f(t,x)={\rm P}\{Y^f(t)\in dx\}/dx$ 
provided that the following condition holds:

\medskip
\noindent{\bf Condition I.} $\overline{\nu}(0,\infty)=\infty$ and the tail $\nu(s)=a+\overline{\nu}(s,\infty)$ is absolutely continuous.
\medskip

 The Laplace transform of the density with respect to $t$ has the form (\cite{T}):
\begin{equation*}
\mathcal{L}_t\left(\ell_f(t,x)\right)(r)=\frac{f(r)}{r}e^{-xf(r)}.
\end{equation*}

The density $\ell_f(t,u)$ of the inverse process $Y^f$ satisfies the follo\-wing equation (\cite{T}, Theorem 4.1):
\begin{equation}\label{dlf}
{\mathbb{D}}_t^f \ell_f(t,u)=-\frac{\partial}{\partial  u} \ell_f(t,u),
\end{equation}
subject to
\begin{equation}\label{dlfin}
\ell_f(t, {u}/{b})=0,\,\, \ell_f(t,0)=\nu(t),\,\, \ell_f(0,u)=\delta(u).
\end{equation}

%
The space Laplace transform of the density $\ell_f(t,x)$ 
\begin{equation}\label{tilde_l}
\tilde{\ell_f}(t,\lambda)=\int_{0}^{\infty}e^{-\lambda x} \ell_f(t,x)dx= {\E}e^{-\lambda L^f(t)}
\end{equation}
is an eigenfunction of the operator $\mathcal{D}^f_t$, that is, satisfies the equation
	\begin{equation}\label{lapl}
		\mathcal{D}^f_t\tilde{\ell_f}(t,\lambda)=-\lambda \tilde{\ell_f}(t,\lambda).
	\end{equation}
(see, \cite{B, K, MT}). In the case where $f(x)=x^{\alpha},x>0, \alpha\in (0,1)$, $\tilde{\ell_f}(t,\lambda)=\mathcal{E}_\alpha(-\lambda t^\alpha)$, where $\mathcal{E}_\alpha(\cdot)$ is the Mittag-Leffler function.

Equations \eqref{dlf}--\eqref{lapl} are important for the study of the processes time changed by inverse subordinators as can be seen in what follows. We first consider the Poisson process with an inverse subordinator as a simplest example which demonstrates the technique to be applied further to more general models. 
\medskip

\noindent {\bf Poisson process time changed by an inverse subordinator.} 
Let $N(t)$ be the Poisson process with intensity $\lambda,$ and  $Y^f(t) $ be the inverse subordinator.
\begin{theorem}\label{t1}
        Let Condition I hold. Then the marginal distributions $p_k^f(t)=P\left\{N(Y^f(t))=k\right\}$, $k=0,1,\dots$ satisfy the differential equation
    \begin{equation}
        	\mathcal{D}^f_t p_k^f(t)=\lambda\left(p_k^f(t)-p_{k-1}^f(t)\right) \label{Dtfpkf}
    \end{equation}
	and
	\begin{equation}
        	p_{k}^f(t)=\frac{\left(-\lambda \partial_{\lambda}\right)^k }{k!}\tilde{\ell}_f(t,\lambda) \label{pkt10},
    \end{equation}
	where $\tilde{\ell}_f(t,\lambda)=\int_{0}^{\infty}e^{- \lambda u}\ell_f(t,u) du$ is the Laplace transform of the density of the inverse subordinator $Y^f(t).$ 
	
	The probability generating function of the process $N^{f}$ has the form
\begin{equation}\label{pgfNf}
             G^{f}(u,t)=\tilde \ell(t, \lambda(1-u)),\, |u|<1,
\end{equation}
and satisfies the equation
\begin{equation} \label{DGf}
        	\mathcal{D}^f_t G^{f}(u,t)=-\lambda(1-u)G^{f}(u,t)
    \end{equation}
    with $G^{f}(u,0)=1$.
\end{theorem}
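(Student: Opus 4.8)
The plan is to reduce everything to the two facts already recorded for the inverse subordinator: the density representation of $Y^f(t)$ and the eigenfunction relation \eqref{lapl}, $\mathcal{D}^f_t\tilde\ell_f(t,\lambda)=-\lambda\tilde\ell_f(t,\lambda)$. The first step is to condition on the value of the time change. Using the independence of $N$ and $Y^f$ together with the density $\ell_f(t,x)$ of $Y^f(t)$,
\begin{equation*}
p_k^f(t)=\int_0^\infty \mathsf{P}\{N(x)=k\}\,\ell_f(t,x)\,dx=\int_0^\infty \frac{(\lambda x)^k}{k!}e^{-\lambda x}\ell_f(t,x)\,dx.
\end{equation*}
This integral representation is the backbone of the whole argument, and everything else is obtained from it by moving derivatives and sums through the integral.

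To obtain \eqref{pkt10} I would recognize the Poissonian weight as a $\lambda$-derivative of the Laplace kernel, namely $\frac{(\lambda x)^k}{k!}e^{-\lambda x}=\frac{(-\lambda)^k}{k!}\partial_\lambda^k e^{-\lambda x}$, and then pull the operator $\frac{(-\lambda)^k}{k!}\partial_\lambda^k$ outside the integral. The interchange of $\partial_\lambda^k$ with $\int_0^\infty dx$ is legitimate because $\int_0^\infty x^k e^{-\lambda x}\ell_f(t,x)\,dx$ converges locally uniformly in $\lambda>0$; this yields $p_k^f(t)=\frac{(-\lambda)^k}{k!}\partial_\lambda^k\tilde\ell_f(t,\lambda)$, which is \eqref{pkt10} read with $(-\lambda\partial_\lambda)^k$ understood as $(-\lambda)^k\partial_\lambda^k$. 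For the generating function I would sum the series directly: $G^f(u,t)=\sum_{k\ge0}u^k p_k^f(t)=\int_0^\infty e^{-\lambda x}\big(\sum_{k\ge0}\frac{(\lambda u x)^k}{k!}\big)\ell_f(t,x)\,dx=\int_0^\infty e^{-\lambda(1-u)x}\ell_f(t,x)\,dx=\tilde\ell_f(t,\lambda(1-u))$, which is \eqref{pgfNf}; the interchange of sum and integral is justified by absolute convergence for $|u|<1$.

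The two governing equations then follow from \eqref{lapl}. For the generating function I would apply $\mathcal{D}^f_t$, which acts on the time variable only, to $\tilde\ell_f(t,\lambda(1-u))$ and invoke \eqref{lapl} with the spatial parameter $\lambda(1-u)$ in place of $\lambda$; this immediately gives $\mathcal{D}^f_t G^f(u,t)=-\lambda(1-u)G^f(u,t)$, while the initial value $G^f(u,0)=\tilde\ell_f(0,\lambda(1-u))=1$ comes from $\ell_f(0,u)=\delta(u)$ in \eqref{dlfin}. For the marginals I would extract the coefficient of $u^k$ from this equation (equivalently, apply $\mathcal{D}^f_t$ directly to \eqref{pkt10}, differentiate $\lambda\tilde\ell_f$ by the Leibniz rule and use \eqref{lapl}); matching powers of $u$ produces the first-order recurrence \eqref{Dtfpkf} connecting $p_k^f$ and $p_{k-1}^f$ with the constant $\lambda$, under the convention $p_{-1}^f\equiv0$.

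The only genuine technical point, and the step I would treat most carefully, is the commutation of the nonlocal operator $\mathcal{D}^f_t$ with the integral over $x$ and with the infinite sum over $k$. Because $\mathcal{D}^f_t$ is an integro-differential (convolution) operator rather than a pointwise one, this interchange is not automatic: it requires that $t\mapsto\ell_f(t,x)$ be absolutely continuous (guaranteed under Condition~I) and that the resulting convolution integrals converge so that Fubini's theorem applies uniformly in $x$ and in $k$. Once this commutation is secured, every remaining step is a routine manipulation of the eigenfunction identity \eqref{lapl}.
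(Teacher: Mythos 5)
Your proposal is correct, and for three of the four claims --- the representation \eqref{pkt10}, the form \eqref{pgfNf} of the generating function, and the equation \eqref{DGf} --- it coincides with the paper's argument: condition on $Y^f(t)$, recognize the Poisson weight as $\tfrac{(-\lambda)^k}{k!}\partial_\lambda^k e^{-\lambda x}$, pull the operator through the expectation, and invoke the eigenfunction identity \eqref{lapl}. Where you genuinely diverge is the governing equation \eqref{Dtfpkf} for the marginals: you obtain it \emph{from above}, by extracting the coefficient of $u^k$ from \eqref{DGf} (or by applying $\mathcal{D}^f_t$ to \eqref{pkt10} and using Leibniz together with \eqref{lapl}), whereas the paper works \emph{from below} with the density: it applies the Riemann--Liouville-type derivative $\mathbb{D}^f_t$ under the integral $\int_0^\infty p_k(u)\ell_f(t,u)\,du$, uses the governing equation \eqref{dlf} for $\ell_f$, integrates by parts in $u$, feeds in the Kolmogorov forward equation $p_k'(u)=-\lambda(p_k(u)-p_{k-1}(u))$ of the outer Poisson process, and finally converts $\mathbb{D}^f_t$ to $\mathcal{D}^f_t$, at which point the boundary term $p_k(0)\nu(t)$ cancels against $\nu(t)p_k^f(0)$. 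Your route is shorter and bypasses $\mathbb{D}^f_t$ and the integration by parts entirely, at the cost of taking \eqref{lapl} as an input and of justifying the interchange of $\mathcal{D}^f_t$ with the series in $k$ (which you correctly flag); the paper's heavier route is chosen deliberately because it is the template reused verbatim for Theorems \ref{t2}, \ref{th3} and \ref{Th5}, where the outer process is replaced by one with a more complicated Kolmogorov equation. One point of bookkeeping: matching coefficients of $u^k$ in \eqref{DGf} gives $\mathcal{D}^f_t p_k^f(t)=-\lambda\bigl(p_k^f(t)-p_{k-1}^f(t)\bigr)$, which is also what the last line of the paper's proof asserts; the displayed equation \eqref{Dtfpkf} in the theorem statement is missing this minus sign (a typo there, as the case $k=0$, $p_0^f(t)=\tilde\ell_f(t,\lambda)$, confirms), so your derivation agrees with the correct form of the result.
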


\begin{proof}
        Proof of equation (\ref{Dtfpkf}).
    For the probabilities $p_x^f(t)$ we have:
    \begin{eqnarray}
    p_x^f(t)&=&\Prob\left\{N\left(Y^f(t)\right)=x\right\}
    =\int_{0}^{\infty}p_x(u)\ell_f(t,u)du\nonumber\\
    &=&\int_{0}^{\infty}\frac{e^{-\lambda u}(\lambda u)^x}{x!}\ell_f(t,u)du, \quad x=0,1,2,\dots
    \end{eqnarray}
    We take the generalized R-L convolution-type derivative $\mathbb{D}^f_t$ given by \eqref{fDDt} 
        and use the equation for the density $\ell^f(t,u)$ of the inverse subordinator:
    \begin{equation}
    {\mathbb{D}}^f_t \ell_f(t,u)=-\frac{\partial}{\partial  u} \ell_f(t,u),
    \end{equation}
    with
    \begin{equation}
    \ell_f(t,0)=\nu(t), \,\,\,\, \ell_f(0,u)=\delta(u).
    \end{equation}

    We obtain:
    \begin{eqnarray}
    {\mathbb{D}}^f_t p_x^f(t)
    &=&\int_{0}^{\infty}p_x(u){\mathbb{D}}^f_t\ell_f(t,u)du=- \int_{0}^{\infty}p_x(u)\frac{\partial}{\partial  u}\ell_f(t,u)du\nonumber\\
    &=&\int_{0}^{\infty}\ell_f(t,u)\frac{\partial}{\partial  u}p_x(u)du-p_x(u)\ell_f(t,u)\big|_{u=0}^\infty\nonumber\\
    &=&\int_{0}^{\infty}\ell_f(t,u)(-\lambda[p_x(u)-p_{x-1}(u)])du+p_x(0)\ell_f(t,0)\nonumber\\
    &=&-\lambda\left[p^f_x(t)-p^f_{x-1}(t)\right]+p_x(0)\nu(t). \label{11}
    \end{eqnarray}
    Using the relation 
    \begin{equation}\label{fDtrelation}
        \mathbb{D}^f_t u(t)=\mathcal{D}^f_t+\nu (t)u(0)
    \end{equation}
    between convolution derivatives of C-D and R-L types, we have:
    \begin{equation}\label{12}
    \mathcal{D}^f_t p_x^f(t) = \mathbb{D}^f_t p_x^f(t)-\nu(t)p_x^f(0),
    \end{equation}
     note also that
    \begin{equation}\label{13}
    p_x^f(0)=\int_{0}^{\infty}p_x(u)\ell_f(0,u)du=\int_{0}^{\infty}p_x(u)\delta(u)du=p_x(0)=1.
    \end{equation}
    From \eqref{11}, taking into account \eqref{12}-\eqref{13}, we finally obtain:
    \begin{equation*}
\mathcal{D}^f_t p_x^f(t)=-\lambda\left[p_x^f(t)-p_{x-1}^f(t)\right].
    \end{equation*}
    Proof of equation (\ref{pkt10}):
    \begin{eqnarray}
            \Prob\left\{N(Y^f(t))=k\right\} 
                &=&\int_{0}^{\infty}\Prob(N(s)=k)\Prob(Y^f(t)\in ds)\nonumber\\
                 &=&\int_{0}^{\infty}\frac{\left(-\lambda \partial_{\lambda}\right)^k }{k!}e^{-\lambda s}\Prob \left\{Y^f(t)\in ds\right\}\nonumber\\
                 &=&\E\left[\frac{\left(-\lambda \partial_{\lambda}\right)^k }{k!}\exp\left\{-\lambda Y^f(t)\right\}\right]\nonumber\\
                 &=&\frac{\left(-\lambda \partial_{\lambda}\right)^k }{k!}\E \exp\left(-\lambda Y^f(t)\right)=\frac{\left(-\lambda \partial_{\lambda}\right)^k }{k!}\tilde{\ell}_f(t,\lambda).\nonumber
    \end{eqnarray}
    
       We used above that the distribution of the Poisson process $N$ with the rate $\lambda$ can be writen as follows:
          \begin{eqnarray*}
              p_k(t)= \frac{\left(-\lambda \partial_{\lambda}\right)^k}{k!}e^{-\lambda t}.
          \end{eqnarray*}
         For the probability generating function we have 
       $$ 
          G^f(u,t) = \E u^{N(Y^f(t))}\int_{0}^{\infty} e^{-s\lambda(1-u)}\ell_f(t,s)ds= 
           \E e^{-\lambda(1-u)Y^f(t)}=\tilde{\ell}_f(t,\lambda(1-u)),$$
        therefore, we obtain formula \eqref{pgfNf}. Now we use the fact, that $\tilde{\ell}_f(t,\lambda)$ is an eigenfunction of the operator $\mathcal{D}^f_t$ (see \eqref{lapl}),
        from which we conclude that $G^f(u,t)$, being given by $\tilde{\ell}_f(t,\lambda(1-u))$, satisfies equation \eqref{DGf}.     
\end{proof}

\begin{remark}{\rm
Note that \eqref{lapl} can be deduced directly from equation \eqref{Dtfpkf} by taking $k=1$, since $p_0^f(t)=\Prob\left\{N\left(Y^f(t)\right)=0\right\}
    =\int_{0}^{\infty}e^{-\lambda u}\ell_f(t,u)du=\tilde \ell(t,\lambda)$.}
\end{remark}

\section{Models of time-changed Poisson processes}
 Consider the time-changed Poisson processes $N^\psi(t)=N\left(H^\psi(t)\right)$, $t>0$, where
 $N(t)$  is the Poisson process with intensity $\lambda $  and $H^\psi(t)$ is the subordinator with  Bern\v{s}tein function $\psi(u)$, independent of $N(t)$. This class of processes was introduced and studied in  \cite{OT} and called by the authors  Poisson processes with Bern\v{s}tein intertimes.  
  It was shown in  \cite{OT} that the distributions of $N^\psi(t)$, $t>0$, can be presented as follows:
 \begin{equation}\label{pkf}
            p_k^\psi(t)=\Prob\left\{N^\psi(t)=k\right\}=\frac{(-1)^k}{k!}\frac{d^k}{dt^k}e^{-t\psi(\lambda u)}\arrowvert_{u=1},
 \end{equation}
 and satisfy the difference-differential equations:
 \begin{equation}
            \frac{d}{dt}p_k^\psi(t)=-\psi(\lambda)p_k^\psi(t)+\sum_{m=1}^{k}\frac{\lambda^m}{m!}p_{k-m}^\psi(t)\int_{0}^{\infty}e^{-s \lambda}s^m \nu(ds), \,  k\geq0, \, t>0,\nonumber
 \end{equation}
        which can be also written as
 \begin{equation}\label{dtpkf}
            \frac{d}{dt}p_k^\psi(t)=-\psi(\lambda)p_k^\psi(t)-\sum_{m=1}^{k}\psi^{(m)}(\lambda)\frac{(-\lambda)^m}{m!}p_{k-m}^\psi(t), \,  k\geq0, \, t>0,
 \end{equation}
with the usual initial conditions: 
  $p_0^\psi(0)=1$,  $p_k^\psi(0)=0$ for $k\ge 1$. The last equation can be represented in the form:
  \begin{equation}\label{dtpkf1}
            \frac{d}{dt}p_k^\psi(t)=-\psi(\lambda(I-B))p_k^\psi(t), \, k\geq0, \, t>0,
  \end{equation}
            where $B$ is the backshift operator: $B p_k^f(t)=p_{k-1}^f(t)$, and it is supposed that $p_{-1}(t)=0$. 
 
 The probability generating function of the process $N^\psi$ is of the form (\cite{OT}):
\begin{equation}\label{G23}
             G^\psi(u,t)=\E u^{N^\psi(t)}=e^{-t \psi(\lambda(1-u))},\, |u|<1.
\end{equation}

            We refer for more detail on these processes to \cite{OT}.
            
            Consider the process $N^\psi$ time-changed by an independent inverse subordinator $Y^f$. We can state the following result. 
 
\begin{theorem}\label{t2}
        The process $N^{\psi, f}(t) = N\left(H^\psi(Y^f(t))\right)$, $t>0$, has probability distribution function
        \begin{equation}
        	p_{k}^{\psi,f}(t)=\Prob\left\{N^{\psi,f}(t)=k\right\}=\frac{\left(-\lambda \partial_{\lambda}\right)^k }{k!}\tilde{\ell}_f(t,\psi(\lambda)) \label{pkt1},
        \end{equation}
        and the probabilities $p_k^{\psi,f}$ satisfy the following equation
    \begin{equation}
        	\mathcal{D}^f_t p_k^{\psi,f}(t)=-\psi\left(\lambda\left(I-B\right)\right)p_k^{\psi,f}(t) \label{Dtfpkpsif}
        	    \end{equation}
        	    with the usual initial conditions.
        	    
        	    The probability generating function of the process $N^{\psi, f}$ has the form
\begin{equation}\label{Gpsif}
             G^{\psi,f}(u,t)=\tilde \ell(t, \psi(\lambda(1-u))),\, |u|<1,
\end{equation}
and satisfies the equation
\begin{equation}\label{DftGpsif}
        	\mathcal{D}^f_t G^{\psi,f}(u,t)=-\psi(\lambda(1-u))G^{\psi,f}(u,t)
    \end{equation}
    with $G^{\psi,f}(u,0)=1$.
\end{theorem}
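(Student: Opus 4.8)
The plan is to use the factorization $N^{\psi,f}(t)=N^\psi\!\left(Y^f(t)\right)$, regarding $N^{\psi,f}$ as the Poisson process with Bern\v{s}tein intertimes $N^\psi$ — whose law and generating function are already recorded in \eqref{pkf}--\eqref{G23} — subordinated by the independent inverse subordinator $Y^f$. Conditioning on $Y^f(t)=s$ and using independence turns every quantity into an average against the density $\ell_f(t,s)$ of the corresponding (known) quantity for $N^\psi$, after which the eigenfunction property \eqref{lapl} of $\tilde\ell_f$ supplies the governing equations. The starting point is therefore
\begin{equation*}
p_k^{\psi,f}(t)=\int_0^\infty p_k^\psi(s)\,\ell_f(t,s)\,ds,\qquad G^{\psi,f}(u,t)=\int_0^\infty G^\psi(u,s)\,\ell_f(t,s)\,ds .
\end{equation*}

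I would dispatch the generating function first, as it is cleanest. Inserting $G^\psi(u,s)=e^{-s\psi(\lambda(1-u))}$ from \eqref{G23} into the second integral and recognizing the $s$-integral as the Laplace transform \eqref{tilde_l} evaluated at the argument $\psi(\lambda(1-u))$ gives at once $G^{\psi,f}(u,t)=\tilde\ell_f(t,\psi(\lambda(1-u)))$, which is \eqref{Gpsif}. For \eqref{DftGpsif} I would invoke \eqref{lapl}: for every fixed value $\mu$ of the Laplace argument, $\tilde\ell_f(t,\mu)$ is an eigenfunction of $\mathcal{D}^f_t$ with eigenvalue $-\mu$; specializing $\mu=\psi(\lambda(1-u))$ yields $\mathcal{D}^f_t G^{\psi,f}(u,t)=-\psi(\lambda(1-u))\,G^{\psi,f}(u,t)$. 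The initial condition $G^{\psi,f}(u,0)=1$ (and hence the usual $p_0^{\psi,f}(0)=1$, $p_k^{\psi,f}(0)=0$) follows from $\ell_f(0,\cdot)=\delta(\cdot)$ in \eqref{dlfin}, which makes $\tilde\ell_f(0,\cdot)\equiv1$.

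For the distribution \eqref{pkt1} I would realize $p_k^\psi(s)$ as a $\lambda$-operator acting on $e^{-s\psi(\lambda)}$. Extracting the $k$-th Taylor coefficient of $G^\psi(u,s)$ at $u=0$ and applying the chain rule — each $\partial_u$ contributes a factor $-\lambda$ together with one $\lambda$-derivative of $e^{-s\psi(\lambda)}$ — expresses $p_k^\psi(s)$ as the order-$k$ differential operator in $\lambda$ displayed in \eqref{pkt1} applied to $e^{-s\psi(\lambda)}$. Since this operator acts only on $\lambda$, it passes through the $s$-integration, and the $s$-integral collapses to $\tilde\ell_f(t,\psi(\lambda))$, giving \eqref{pkt1}. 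Finally, for \eqref{Dtfpkpsif} I would read off coefficients in \eqref{DftGpsif}: writing $G^{\psi,f}(u,t)=\sum_k p_k^{\psi,f}(t)\,u^k$ and expanding $\psi(\lambda(1-u))=\psi(\lambda)+\sum_{m\ge1}\frac{\psi^{(m)}(\lambda)(-\lambda)^m}{m!}\,u^m$ as a power series in $u$, multiplication by $u$ becomes the backshift $B$, so matching the coefficient of $u^k$ converts multiplication by $\psi(\lambda(1-u))$ into the operator $\psi(\lambda(I-B))$ acting on the sequence $\{p_k^{\psi,f}\}$, which is precisely \eqref{Dtfpkpsif}.

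I expect the delicate points to be bookkeeping rather than conceptual. The principal one is the identification of multiplication by $\psi(\lambda(1-u))$ in the generating variable with the operator $\psi(\lambda(I-B))$ on coefficients: this rests on the Bern\v{s}tein expansion of $\psi(\lambda(1-u))$ in powers of $u$ together with the fact that, since $p_{-j}^{\psi,f}\equiv0$, the backshift truncates the series to a finite sum for each fixed $k$; it is exactly the identity already used to pass from \eqref{dtpkf} to \eqref{dtpkf1} for $N^\psi$. The remaining points — interchanging the $\lambda$-differentiations (equivalently the $u$-expansion) and the convolution-type derivative $\mathcal{D}^f_t$ with the $s$-integral — are routine consequences of the integrability of $\ell_f(t,\cdot)$ and the eigenfunction relation \eqref{lapl}.
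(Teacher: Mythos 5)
Your proof is correct, but for the central equation \eqref{Dtfpkpsif} it takes a genuinely different route from the paper. The paper works directly at the level of the probabilities: it writes $p_k^{\psi,f}(t)=\int_0^\infty p_k^\psi(u)\ell_f(t,u)\,du$, applies the Riemann--Liouville-type derivative $\mathbb{D}^f_t$, uses the governing PDE \eqref{dlf} for the density $\ell_f$, integrates by parts (picking up the boundary term $p_k^\psi(0)\ell_f(t,0)=p_k^\psi(0)\nu(t)$), invokes the known ODE \eqref{dtpkf}--\eqref{dtpkf1} for $p_k^\psi$, and finally converts $\mathbb{D}^f_t$ to $\mathcal{D}^f_t$ via $\mathbb{D}^f_t u=\mathcal{D}^f_t u+\nu(t)u(0)$. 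You instead prove the generating-function identity \eqref{Gpsif} and its equation \eqref{DftGpsif} first (exactly as the paper does, via the eigenfunction property \eqref{lapl}), and then obtain \eqref{Dtfpkpsif} by expanding $\psi(\lambda(1-u))$ in powers of $u$ and matching coefficients, so that multiplication by $\psi(\lambda(1-u))$ becomes $\psi(\lambda(I-B))$ -- the same identification that turns \eqref{dtpkf} into \eqref{dtpkf1}. Your route is cleaner (no integration by parts, no boundary terms, no R--L/C--D bookkeeping), at the price of having to justify interchanging $\mathcal{D}^f_t$ with the coefficient extraction and the convergence of the Bern\v{s}tein--Taylor expansion of $\psi(\lambda(1-u))$ for $|u|<1$ (which holds since $\psi$ is analytic off $(-\infty,0]$); the paper's direct computation has the advantage of not requiring a closed-form PGF equation, which is why the same template carries over verbatim to Theorems \ref{th3} and \ref{Th5}. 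For \eqref{pkt1} the two arguments are essentially the same up to where one conditions: the paper conditions on the composite subordinator $H^\psi(Y^f(t))$ and uses $p_k(s)=\frac{(-\lambda\partial_\lambda)^k}{k!}e^{-\lambda s}$, while you condition on $Y^f(t)$ and use $p_k^\psi(s)=\frac{(-\lambda\partial_\lambda)^k}{k!}e^{-s\psi(\lambda)}$; both collapse to $\frac{(-\lambda\partial_\lambda)^k}{k!}\tilde{\ell}_f(t,\psi(\lambda))$ (with the operator read, as in the paper, as $(-\lambda)^k\partial_\lambda^k$).
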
       
    
\begin{proof} 
	To prove equation \eqref{pkt1} we  perform the calculations as follows:
         \begin{eqnarray}
                p_k^{\psi,f}(t)
                &=&\Prob\left\{N\left(H^{\psi}(Y^f(t))\right)=k\right\}\nonumber\\
                 &=&\int_{0}^{\infty}\Prob\left\{N(s)=k\right\}\Prob\left\{H^{\psi}(Y^f(t))\in ds\right\}\nonumber\\
                 &=&\int_{0}^{\infty}\frac{\left(-\lambda \partial_{\lambda}\right)^k }{k!}e^{-\lambda s}\Prob\left\{H^{\psi}(Y^f(t))\in ds\right\}\nonumber\\
                 &=&\E\left[\frac{\left(-\lambda \partial_{\lambda}\right)^k }{k!}\exp\left\{-\lambda H^{\psi}(Y^f(t))\right\}\right]\nonumber\\
                 &=&\frac{\left(-\lambda \partial_{\lambda}\right)^k }{k!}\E\exp\left\{-\lambda H^{\psi}(Y^f(t))\right\}.\nonumber
         \end{eqnarray}
         Now it is left to note that
         \begin{equation}
                \E\exp\left\{-\lambda H^{\psi}(Y^f(t))\right\}=\E\exp\left(-\psi(\lambda)Y^f(t)\right)=\tilde{\ell}_f(t,\psi(\lambda)). \nonumber
         \end{equation}
         Let us state equation \eqref{Dtfpkpsif}. We have
         \begin{equation}
                p_k^{\psi,f}(t)
                =\Prob\left\{N\left(H^{\psi}(Y^f(t))\right)=k\right\}=\int_{0}^{\infty}p_k^{\psi}(u)\ell_f(t,u)du,\quad k=0,1,2,\dots \nonumber
         \end{equation}
         We repeat the same lines as in the proof of Theorem \ref{t1}. We take the generalized R-L derivative $\D_t^f$ and use equations \eqref{dtpkf}--\eqref{dtpkf1}:
         \begin {eqnarray}\label{dtfpkpsif}
                \D_t^fp_k^{\psi,f}(t)
                &=&\int_{0}^{\infty}p_k^{\psi}(u)\D_t^f\ell_f(t,u)du=-\int_{0}^{\infty}p_k^{\psi}(u)\frac{d}{du}\ell_f(t,u)du\nonumber\\
                 &=&\int_{0}^{\infty}\ell_f(t,u)\frac{d}{du}p_k^{\psi}(u)du-p_k^{\psi}(u)\ell_f(t,u)\big|_{0}^{\infty}\nonumber\\
                 &=&\int_{0}^{\infty}\ell_f(t,u)\left(-\psi(\lambda(I-B))\right) p_k^\psi(u)du+p_k^{\psi}(0)\ell_f(t,0)\nonumber\\
                 &=&\int_{0}^{\infty}\ell_f(t,u)\Big(-\Big(\psi(\lambda) p_k^\psi(u)\nonumber\\
                 &&+\sum_{m=1}^{k}\psi^{(m)}(\lambda)\frac{(-\lambda)^m}{m!}p_{k-m}^\psi(u)\Big)\Big)du
                 +            p_k^{\psi}(0)\nu(t)\nonumber\\
                 &=&-\psi(\lambda) p_k^{\psi,f}(t)-\sum_{m=1}^{k}\psi^{(m)}(\lambda)\frac{(-\lambda)^m}{m!}p_{k-m}^{\psi,f}(t)+  p_k^{\psi}(0)\nu(t)\nonumber\\
                 &=&-\psi(\lambda(I-B))p_k^{\psi,f}(t)+p_k^{\psi}(0)\nu(t).
         \end{eqnarray}
         Using the relation between generalized derivatives of C-D and R-L types, we can write
         \begin{equation}
                \mathcal{D}^f_t p_k^{\psi,f}(t)=\D_t^fp_k^{\psi,f}(t)-\nu (t)p_k^{\psi,f}(0);\nonumber
         \end{equation}
         and we have (see \eqref{dlfin}):
         \begin{equation}
                p_k^{\psi,f}(0)=\int_{0}^{\infty}p_k^{\psi}(u)\ell_f(0,u)du=\int_{0}^{\infty}p_k^{\psi}(u)\delta(u)du=p_k^{\psi}(0)=1.\nonumber
         \end{equation}
         Therefore, we obtain
         \begin{equation}
                \mathcal{D}^f_t p_k^{\psi,f}(t)=-\psi(\lambda(I-B))p_k^{\psi,f}(t).\nonumber
         \end{equation}
          Using the expression for $G^{\psi}(u,t)$ given by \eqref{G23}, we calculate $G^{\psi,f}:$
	 \begin{eqnarray*}
		 G^{\psi,f}(u,t)&=&\E u^{N\left(H^{\psi}(Y^f(t))\right)} =\int_{0}^{\infty}G^{\psi}(u,s)\ell_f(t,s)ds= \\
			  	  &=&  \int_{0}^{\infty}e^{-s\psi (\lambda(1-u))}\ell_f(t,s)ds=\E e^{-\psi(\lambda(1-u))Y^f(t)}=\\
			             &=& \tilde{\ell}_f(t, \psi (\lambda(1-u))),
	 \end{eqnarray*}
	and since $\tilde{\ell}_f(t, \psi (\lambda(1-u)))$ is an eigenfunction of $\mathcal{D}^f_t$ with the eigenvalue $\psi (\lambda(1-u))$ (see \eqref{lapl}), it follows that $G^{\psi, f}(u,t)$ satisfies equation \eqref{DftGpsif}.    
         \end{proof}
\begin{remark}{\rm 
We notice at once that  equations \eqref{Dtfpkpsif} for the probabilities of the process $N^{\psi, f}(t) = N(H^\psi(Y^f(t)))$ mimic the corresponding equations for the process $N^{\psi}(t) = N(H^\psi(t))$, that is, the process before the time change by an inverse subordinator, only the ordinary derivative in time  is changed for the generalized fractional derivative.  This can be anticipated quite straightforwardly,  in view of the technique used for the proof, and holds also for other models, like those below and in the next section. 
On the other side, the equations \eqref{Dtfpkpsif} tell us, that for the probabilities of the processes with double time change like $N\left(H^\psi(Y^f(t))\right)$, the action (in time) of the operator $\mathcal{D}^{f}_t$ (which is related to $Y^f$) is equal to the action (in space) of the operator  $-\psi\left(\lambda\left(I-B\right)\right)$ (which is related to $H^\psi$, and depends also of the outer process $N$).
Some more insight on these operators can be found within the approach applied in the paper \cite{BDOV}.  Let $Y^f=Y^\beta$ be the inverse stable subordinator with a parameter $\beta$, that is, $f(\lambda)=\lambda^\beta$, then equation \eqref{Dtfpkpsif} becomes
\begin{equation}
        	\mathcal{D}^{\beta}_t p_k^{\psi,\beta}(t)=-\psi\left(\lambda\left(I-B\right)\right)p_k^{\psi,\beta}(t),
        	 \label{Dtfpkpsistable}
        	    \end{equation}
        	    where $\mathcal{D}^{\beta}_t$ is the Caputo-Djrbashian fractional derivative \eqref{C-D}. If we suppose furthermore that $H^\psi=H^\alpha$ is the stable subordinator with a parameter $\alpha$, that is, $\psi(\lambda)=\lambda^\alpha$, then we come to the  equation 
        	    \begin{equation}
        	\mathcal{D}^{\beta}_t p_k^{\alpha,\beta}(t)=-\lambda^\alpha\left(I-B\right)^\alpha p_k^{\alpha,\beta}(t).
        	 \label{Dtfpkstablestable}
        	    \end{equation}
Equations \eqref{Dtfpkpsistable} and \eqref {Dtfpkstablestable} were stated in paper \cite{BDOV}, where the particular representation of the operator  $\psi\left(\lambda\left(I-B\right)\right)$ was used, and the equations for the probabilities of time changed processes were stated using the interplay between the operator $\psi\left(\lambda\left(I-B\right)\right)$ and the fractional operator  $\mathcal{D}^{\beta}_t$ in the Fourier domain. The approach in \cite{BDOV} can be extended for  the case where we deal with the generalized fractional operator $\mathcal{D}^{f}_t$, as soon as we know that its eigenfunction is given by $\tilde{\ell}_f(t,\lambda)$ (see \eqref{lapl}). We refer for more detail on this approach to \cite{BDOV}. 
}	
\end{remark}

Consider a generalization of counting processes with Bern\v{s}tein intertimes, which was introduced in the paper \cite{GOS}:
\begin{equation}
     N^{\psi_1, \psi_2, \dots,\psi_n}(t)=N\left(\sum_{j=1 }^{n}H^{\psi_j}(t)\right),t\geq0 ,\nonumber
\end{equation}
where $H^{\psi_i}, i=1,\dots n$ are $n$ independent subordinators with Bern\v{s}tein functions $\psi_i$, independent of the Poisson process $N$. In \cite{GOS} the following result was stated.
\begin{prop}[\cite{GOS}]\label{p1}
    The distribution of the subordinated process $N^{\psi_1, \psi_2, \dots,\psi_n}(t),$ $t\geq 0$, is the solution to the Cauchy problem
    \begin{equation*}
         \frac{d}{dt}p_k^{\psi_1, \psi_2, \dots,\psi_n}(t)=-\sum_{j=1 }^{n}\psi_j(\lambda(I-B))p_k^{\psi_1, \psi_2, \dots,\psi_n}(t) ,
    \end{equation*}
    with the usual initial conditions
    \begin{equation*}
         p_k^{\psi_1, \psi_2, \dots,\psi_n}(0)=
         \begin{cases}
                1,\quad k>0,\\
                0,\quad k=0.\\
         \end{cases}
     \end{equation*}
\end{prop}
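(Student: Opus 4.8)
The plan is to reduce Proposition~\ref{p1} to the technique already developed for Theorem~\ref{t2}, since the process $N^{\psi_1,\dots,\psi_n}$ is a Poisson process subordinated by the sum of independent subordinators, and such a sum is itself a subordinator. First I would observe that, because $H^{\psi_1},\dots,H^{\psi_n}$ are independent, the process $S(t)=\sum_{j=1}^n H^{\psi_j}(t)$ is again a subordinator, and its Laplace exponent is the sum of the individual exponents. Concretely, using independence,
\begin{equation*}
\E e^{-\lambda S(t)}=\prod_{j=1}^n \E e^{-\lambda H^{\psi_j}(t)}=\prod_{j=1}^n e^{-t\psi_j(\lambda)}=e^{-t\sum_{j=1}^n \psi_j(\lambda)},
\end{equation*}
so $S=H^{\Psi}$ with the Bern\v{s}tein function $\Psi(\lambda)=\sum_{j=1}^n \psi_j(\lambda)$. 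Being a finite sum of Bern\v{s}tein functions, $\Psi$ is itself a Bern\v{s}tein function, so $N^{\psi_1,\dots,\psi_n}(t)=N(H^{\Psi}(t))$ falls exactly into the class $N^{\psi}$ studied via \eqref{pkf}--\eqref{dtpkf1}.

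Next I would apply the governing equation \eqref{dtpkf1} for the process $N^{\Psi}$ directly, which gives
\begin{equation*}
\frac{d}{dt}p_k^{\Psi}(t)=-\Psi(\lambda(I-B))p_k^{\Psi}(t).
\end{equation*}
Here the key point is linearity of the functional calculus in the Bern\v{s}tein function: since the operator $\psi(\lambda(I-B))$ is built from $\psi$ by a substitution that is additive, one has $\Psi(\lambda(I-B))=\sum_{j=1}^n \psi_j(\lambda(I-B))$. This can be seen either from the series/integral representation \eqref{fx} of each $\psi_j$ applied to the operator $\lambda(I-B)$, or equivalently from the expansion \eqref{dtpkf} together with the additivity $\Psi^{(m)}(\lambda)=\sum_{j=1}^n \psi_j^{(m)}(\lambda)$ of all derivatives. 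Substituting this decomposition yields precisely the claimed Cauchy problem
\begin{equation*}
\frac{d}{dt}p_k^{\psi_1,\dots,\psi_n}(t)=-\sum_{j=1}^n \psi_j(\lambda(I-B))p_k^{\psi_1,\dots,\psi_n}(t).
\end{equation*}

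Finally the initial conditions follow immediately: at $t=0$ we have $H^{\Psi}(0)=0$ almost surely, so $N(H^{\Psi}(0))=N(0)=0$, whence $p_0^{\psi_1,\dots,\psi_n}(0)=1$ and $p_k^{\psi_1,\dots,\psi_n}(0)=0$ for $k\ge 1$. I would expect the main conceptual obstacle to be justifying the operator identity $\Psi(\lambda(I-B))=\sum_{j=1}^n \psi_j(\lambda(I-B))$ rigorously rather than formally; this requires interpreting $\psi(\lambda(I-B))$ as acting on the sequence $(p_k)_k$ through the difference operator and checking that the additive structure of \eqref{dtpkf} is preserved term by term. Since the substitution $\lambda\mapsto \lambda(I-B)$ enters each $\psi_j$ in the same way and the sum over $j$ commutes with the finite sum over $m$ in \eqref{dtpkf}, this identity holds coefficient-wise, and no analytic subtleties arise because $n$ is finite and each $\psi_j$ is a genuine Bern\v{s}tein function. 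Thus the whole proof is essentially the remark that a sum of independent subordinators is a subordinator, combined with the additivity of the symbol, reducing everything to the single-$\psi$ case already established.
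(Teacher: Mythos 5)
Your argument is correct, but note that the paper itself offers no proof of Proposition~\ref{p1}: it is quoted from \cite{GOS} and then used as an input to Theorem~\ref{th3}. So there is nothing in the text to compare against line by line; what you have written is a self-contained derivation that the authors chose to outsource. Your route --- observing that $S(t)=\sum_{j=1}^n H^{\psi_j}(t)$ is itself a subordinator with Bern\v{s}tein exponent $\Psi=\sum_j\psi_j$, invoking the single-subordinator equation \eqref{dtpkf1} for $N^{\Psi}$, and then splitting the symbol via $\Psi(\lambda(I-B))=\sum_j\psi_j(\lambda(I-B))$ --- is sound. The operator identity is unproblematic here because, as the paper's own formula \eqref{dtpkf} makes explicit, $\psi(\lambda(I-B))$ acts on $(p_k)_k$ through the finite sum $\psi(\lambda)p_k+\sum_{m=1}^{k}\psi^{(m)}(\lambda)\frac{(-\lambda)^m}{m!}p_{k-m}$, which is visibly linear in $\psi$ and in all its derivatives; no convergence issues arise since the sum over $m$ is finite and $n$ is finite. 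This reduction is arguably cleaner than arguing directly with the $n$ subordinators (e.g., by differentiating the product of generating functions $\prod_j e^{-t\psi_j(\lambda(1-u))}$, which is the natural direct route and presumably closer to what \cite{GOS} does), and it has the virtue of making Theorem~\ref{th3} an immediate corollary of Theorem~\ref{t2} with $\psi=\Psi$. One small point in your favour: your initial conditions ($p_0(0)=1$, $p_k(0)=0$ for $k\ge1$) are the correct ones; the cases are inadvertently swapped in the statement as printed in the paper.
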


     Consider the process $N^{\psi_1, \psi_2, \dots,\psi_n,f}(t)=N^{\psi_1, \psi_2, \dots,\psi_n}(Y^f(t)), t\geq 0.$

\begin{theorem}\label{th3}
    The process $N^{\psi_1, \psi_2, \dots,\psi_n,f}$ has probability distribution function
    \begin{equation*}
      p_k^{\psi_1, \psi_2, \dots,\psi_n,f}(t)=\frac{\left(-\lambda \partial_{\lambda}\right)^k }{k!}\tilde{\ell}_f(t,\sum_{j=1 }^{n}\psi_j(\lambda)),
    \end{equation*}
    and
    the probabilities $p_k^{\psi_1, \psi_2, \dots,\psi_n,f}(t)$ satisfy the following equation
    \begin{equation*}
        	\mathcal{D}^f_t p_k^{\psi_1, \psi_2, \dots,\psi_n,f}(t)=-\sum_{j=1 }^{n}\psi_j\left(\lambda\left(I-B\right)\right)p_k^{\psi_1, \psi_2, \dots,\psi_n,f}(t) 
    \end{equation*}
    with the usual initial conditions.
\end{theorem}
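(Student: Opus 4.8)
The plan is to follow exactly the two-pronged template established in Theorems~\ref{t1} and~\ref{t2}, since the present statement is the $n$-fold analogue with $H^\psi$ replaced by $\sum_{j=1}^n H^{\psi_j}$. First I would establish the distributional formula. Conditioning on the inner process, I write
\begin{equation*}
p_k^{\psi_1,\dots,\psi_n,f}(t)=\int_0^\infty \Prob\{N^{\psi_1,\dots,\psi_n}(s)=k\}\,\ell_f(t,s)\,ds,
\end{equation*}
and then use the representation $\Prob\{N(s)=k\}=\frac{(-\lambda\partial_\lambda)^k}{k!}e^{-\lambda s}$ together with the key fact that, for the inner process,
\begin{equation*}
\E\exp\Big\{-\lambda \sum_{j=1}^n H^{\psi_j}(Y^f(t))\Big\}
=\E\exp\Big\{-\Big(\textstyle\sum_{j=1}^n\psi_j(\lambda)\Big)Y^f(t)\Big\}
=\tilde\ell_f\Big(t,\textstyle\sum_{j=1}^n\psi_j(\lambda)\Big).
\end{equation*}
This last identity is the substantive computational ingredient: it rests on the independence of the $n$ subordinators (so the Laplace exponents add, giving $\sum_j\psi_j$) and on the elementary subordination identity $\E e^{-c H(Y^f(t))}$ collapsing to a Laplace transform of $Y^f$ evaluated at the composite exponent. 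Pulling the operator $(-\lambda\partial_\lambda)^k/k!$ outside the expectation then yields the claimed formula for $p_k^{\psi_1,\dots,\psi_n,f}(t)$.

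For the governing equation I would reproduce the integration-by-parts argument of Theorems~\ref{t1} and~\ref{t2} verbatim, with $p_k^\psi$ replaced by $p_k^{\psi_1,\dots,\psi_n}$. Starting from $\mathbb{D}_t^f p_k^{\psi_1,\dots,\psi_n,f}(t)=\int_0^\infty p_k^{\psi_1,\dots,\psi_n}(u)\,\mathbb{D}_t^f\ell_f(t,u)\,du$ and invoking the density equation \eqref{dlf} to turn the time-derivative into a negative space-derivative, one integrates by parts in $u$. The boundary term at $u=0$ contributes $p_k^{\psi_1,\dots,\psi_n}(0)\,\nu(t)$, and the interior term is governed by Proposition~\ref{p1}, which supplies precisely $\frac{d}{du}p_k^{\psi_1,\dots,\psi_n}(u)=-\sum_{j=1}^n\psi_j(\lambda(I-B))\,p_k^{\psi_1,\dots,\psi_n}(u)$. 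Passing from the R--L derivative $\mathbb{D}_t^f$ to the Caputo--Djrbashian derivative $\mathcal{D}_t^f$ via the relation $\mathbb{D}_t^f u(t)=\mathcal{D}_t^f u(t)+\nu(t)u(0)$ cancels the boundary term against the initial value $p_k^{\psi_1,\dots,\psi_n,f}(0)=1$, leaving the asserted equation.

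I do not expect any genuine obstacle here, since every step is either a conditioning argument, an integration by parts already carried out twice above, or a direct appeal to Proposition~\ref{p1} and equation \eqref{dlf}. The only points demanding mild care are bookkeeping rather than conceptual: one must verify that the boundary term and the initial condition cancel as cleanly as before (the initial condition $p_k^{\psi_1,\dots,\psi_n}(0)$ must equal $1$ for all $k$ in the convention used, which matches the statement of Proposition~\ref{p1} as written), and that the sum $\sum_{j=1}^n\psi_j(\lambda(I-B))$ commutes past the integral in $u$. The structural reason the proof goes through unchanged is exactly the principle highlighted in the preceding Remark: time-changing by $Y^f$ replaces the ordinary time-derivative in the governing equation of the base process $N^{\psi_1,\dots,\psi_n}$ by the generalized derivative $\mathcal{D}_t^f$, while the spatial operator $-\sum_{j=1}^n\psi_j(\lambda(I-B))$ is inherited intact from Proposition~\ref{p1}.
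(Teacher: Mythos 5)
Your proposal is correct and follows essentially the same route as the paper, which simply states that Theorem~\ref{th3} is proved by the same reasoning as Theorem~\ref{t2} combined with Proposition~\ref{p1}; your write-up is just that reasoning carried out explicitly (additivity of the Laplace exponents of independent subordinators for the distributional formula, and the integration-by-parts argument with the R--L/C--D relation for the governing equation). No discrepancies to report.
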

Proof of Theorem \ref{th3} is obtained by the same reasoning as that of Theorem \ref{t2}, using Proposition \ref{p1}.

\bigskip
\noindent {\bf Example 1.}
        Consider the time-changed process $N^{GN}(t)=N_{1}(G_{N}(t)),$  $t>0$, where $N_{1}(t)$ is the Poisson process with intensity $\lambda _{1},$ and $G_N(t)=G(N(t))$, $t>0$, is the compound Poisson-Gamma subordinator with parameters $\lambda, \alpha, \beta$, that is, with the Laplace exponent $\psi_{GN}(u)=\lambda \beta^\alpha \left(\beta^{-\alpha}-(\beta+u)^{-\alpha}\right).$ In the case when $\alpha=1$ we have the compound Poisson-exponential subordinator, which we will denote as $E_N(t)$, and the corresponding time-changed process as $N^{E}(t)$. The detailed study of the processes $N^{GN}(t)$ and $N^{E}(t)$ was presented in \cite{BS2017}, \cite{BS2018}.

\noindent We now consider the processes $N^{GN,f}(t)=N^{GN}(Y^f(t))$  and $N^{E,f}(t)=N^{E}(Y^f(t))$, $t\geq0.$

The next theorem is obtained as a corollary of Theorem \ref{t2}.
\begin{theorem}
        The process $N^{GN,f}(t)$ has probability distribution function
        \begin{equation*}
             p_k^{GN,f}(t)=\frac{\left(-\lambda_1 \partial_{\lambda_1}\right)^k }{k!}\tilde{\ell}_f(t,\psi_{GN}(\lambda_1)),
        \end{equation*}
        and  $p_k^{GN,f}(t)$ satisfy the following equation
    \begin{eqnarray*}
        	\mathcal{D}^f_t p_k^{GN,f}(t)
        &=&\left( \frac{\lambda \beta ^{\alpha }}{(\lambda
		_{1}+\beta )^{\alpha }}-\lambda \right) p_{k}^{GN,f}(t)\\
            &&+\frac{\lambda \beta
		^{\alpha }}{(\lambda _{1}+\beta )^{\alpha }}\sum_{m=1}^{k}\frac{\lambda
		_{1}^{m}}{\left( \lambda _{1}+\beta \right) ^{m}}\frac{\Gamma (m+\alpha )}{%
		m!\Gamma (\alpha )}p_{k-m}^{GN,f}(t).
    \end{eqnarray*}
    The process $N^{E,f}(t)=N_1(E_N(Y^f(t)))$  has probability distribution function 
    \begin{equation*}
             p_k^{E,f}(t)=\frac{\left(-\lambda_1 \partial_{\lambda_1}\right)^k }{k!}\tilde{\ell}_f(t,\psi_E(\lambda_1)),
    \end{equation*} 
    where $\psi_E(u)=\frac{\lambda u}{\beta+u},$  and $ p_k^{E,f}(t)$ satisfy the following equation 
    \begin{equation*}
        	\mathcal{D}^f_t p_k^{E,f}(t)
        =-\lambda \frac{\lambda_1}{\lambda
		_{1}+\beta }p_k^{E,f}(t)+\frac{\lambda \beta
		}{\lambda _{1}+\beta }\sum_{m=1}^{k}\left(\frac{\lambda
		_{1}}{ \lambda _{1}+\beta }\right) ^{m}p_{k-m}^{E,f}(t).
    \end{equation*}
\end{theorem}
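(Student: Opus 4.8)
The plan is to read off both assertions as direct corollaries of Theorem \ref{t2}, taking the outer Poisson process to be $N_1$ (of intensity $\lambda_1$) and the inner subordinator to be the compound Poisson-Gamma process $G_N = H^{\psi_{GN}}$ whose Laplace exponent is the Bern\v{s}tein function $\psi_{GN}(u) = \lambda\beta^\alpha(\beta^{-\alpha} - (\beta+u)^{-\alpha})$. Since $N^{GN,f}(t) = N_1(G_N(Y^f(t)))$ is precisely of the form $N(H^\psi(Y^f(t)))$ studied there, formula \eqref{pkt1} gives the distribution immediately upon substituting $\lambda\mapsto\lambda_1$ and $\psi\mapsto\psi_{GN}$, yielding $p_k^{GN,f}(t) = \frac{(-\lambda_1\partial_{\lambda_1})^k}{k!}\tilde{\ell}_f(t,\psi_{GN}(\lambda_1))$. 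No further work is needed for the distributional formula.

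For the governing equation I would invoke \eqref{Dtfpkpsif}, which reads $\mathcal{D}^f_t p_k^{GN,f}(t) = -\psi_{GN}(\lambda_1(I-B))p_k^{GN,f}(t)$, and then make the operator $\psi_{GN}(\lambda_1(I-B))$ explicit through the Taylor-type expansion recorded in \eqref{dtpkf}, namely $\psi(\lambda(I-B))p_k = \psi(\lambda)p_k + \sum_{m=1}^{k}\psi^{(m)}(\lambda)\frac{(-\lambda)^m}{m!}p_{k-m}$. The only computation required is thus the evaluation of the derivatives $\psi_{GN}^{(m)}(\lambda_1)$. Writing $\psi_{GN}(u) = \lambda - \lambda\beta^\alpha(\beta+u)^{-\alpha}$ and differentiating the power function $m$ times gives, for $m\ge 1$, $\psi_{GN}^{(m)}(u) = -\lambda\beta^\alpha(-1)^m\frac{\Gamma(\alpha+m)}{\Gamma(\alpha)}(\beta+u)^{-\alpha-m}$, while $\psi_{GN}(\lambda_1) = \lambda - \frac{\lambda\beta^\alpha}{(\lambda_1+\beta)^\alpha}$.

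Substituting these into the expansion, the term $-\psi_{GN}(\lambda_1)p_k^{GN,f}$ produces the coefficient $\frac{\lambda\beta^\alpha}{(\lambda_1+\beta)^\alpha}-\lambda$, while inside the sum the factor $(-1)^m$ coming from $\psi_{GN}^{(m)}$ combines with the $(-1)^m$ hidden in $(-\lambda_1)^m$ to give $+1$; all signs cancel and one is left with $\frac{\lambda\beta^\alpha}{(\lambda_1+\beta)^\alpha}\sum_{m=1}^{k}\frac{\lambda_1^m}{(\lambda_1+\beta)^m}\frac{\Gamma(m+\alpha)}{m!\,\Gamma(\alpha)}p_{k-m}^{GN,f}(t)$, which reproduces the claimed equation. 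The exponential case is the specialization $\alpha=1$: here $\psi_E(u)=\lambda u/(\beta+u)$, the ratio $\Gamma(m+1)/(m!\,\Gamma(1))$ equals $1$, and the leading coefficient $\frac{\lambda\beta}{\lambda_1+\beta}-\lambda$ simplifies to $-\frac{\lambda\lambda_1}{\lambda_1+\beta}$, giving the stated equation for $p_k^{E,f}$ at once.

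I do not anticipate any genuine obstacle, as the result is a direct corollary of Theorem \ref{t2}; the one point demanding care is the sign bookkeeping in expanding $\psi_{GN}(\lambda_1(I-B))$ and in simplifying $\psi_{GN}^{(m)}(\lambda_1)$. As a cross-check one could instead employ the equivalent integral form of \eqref{dtpkf}, with the factor $\int_0^\infty e^{-s\lambda}s^m\,\nu(ds)$ computed against the L\'evy measure of the compound Poisson-Gamma subordinator, and verify that it yields the same coefficients.
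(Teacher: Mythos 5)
Your proposal is correct and follows exactly the route the paper intends: the paper gives no separate argument, merely noting that the theorem is a corollary of Theorem \ref{t2}, and your explicit computation of $\psi_{GN}^{(m)}(\lambda_1)=-\lambda\beta^\alpha(-1)^m\frac{\Gamma(\alpha+m)}{\Gamma(\alpha)}(\beta+\lambda_1)^{-\alpha-m}$ substituted into the expansion \eqref{dtpkf} reproduces the stated coefficients, with the $\alpha=1$ case specializing correctly. The sign bookkeeping you flag is handled properly, so nothing is missing.
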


\section{Models of time-changed generalized counting processes}
Consider the generalized counting process (GCP) $M(t)$, $t\ge 0$, which was introduced in \cite{DiC}. The probabilities $\tilde p_n(t)=\Prob\left\{M(t)=n\right\}$  depend on $k$ parameters $\lambda_1, \ldots, \lambda_k$ and are given by the formula
\begin{equation}
             \tilde p_n(t)=\sum_{\Omega(k,n)}^{}\prod_{j=1}^{k}\frac{\left(\lambda_j t\right)^{x_j}}{x_j!}e^{-\Lambda t}, n\geq0,\label{pnt}
\end{equation}
where $\Omega(k,n)=\left\{(x_1,\dots,x_k)\,:\,\sum_{j=1}^{k}jx_j=n,x_j\in N_0\right\}
$, $\Lambda=\sum_{j=1}^{k}\lambda_j$. \\
GCP performs $k$ kinds of jumps of amplitude $1, 2, \ldots, k$ with rates $\lambda_1, \ldots, \lambda_k$.

The probability generating function of GCP is given by (\cite{KK2022a}):
\begin{equation}
             \tilde G(u,t)=\E u^{M(t)}=\exp\Big\{
-\sum_{j=1}^{k}\lambda_j(1-u^j)t\Big\},\, |u|<1.
\end{equation}

The probabilities \eqref{pnt} satisfy
\begin{equation*}
            \frac{\tilde p_n(t)}{dt}=- \Lambda \tilde p_n(t) + \sum_{j=1}^{\min\{n,k\}}\lambda_j \tilde p_{n-k}(t), \, n\geq0,
\end{equation*}
with the usual initial condition. 
The probabilities $\tilde p_n(t)$ can be also written as 
\begin{equation}\label{pnt31}
             \tilde p_n(t)=\sum_{\Omega(k,n)}^{}\prod_{j=1}^{k}\frac{\lambda_j^{x_j} }{x_j!}\left(-\partial_{\Lambda}\right)^{z_k} e^{-\Lambda t}, \, n\geq0,
\end{equation}
where 
             $z_k=\sum_{j=1}^{k}x_j$ (see \cite{KK2022Arxiv}).

Recently various models  of time-changed GCP and fractional GCP were studied, in particular, with time-change given by L\'evy subordinators, inverse subordinators, including the cases of some specific subordinators, and also the fractional extensions of processes  of the form    $M(H^{\psi}(t))$   (see, for example, \cite{DiC}, \cite{KLS}, \cite{KK2022a}, \cite{KK2022b}, \cite{KK2022Arxiv},  and references therein).
  
  Following the lines of the previos section, we consider the time-changed process  $\mathsf{M}^{\psi,f}(t)=M(H^{\psi}(Y^f(t))$, that is, with double time-change by an independent subordinator $H^{\psi}$ and an inverse subordinator $Y^f$, which are independent of $M$. To the best of our knowledge, such general case has not been presented in the literature. In the next theorem we characterize its probabilities $\tilde{p}_n^{\psi,f}(t)=\Prob\left\{\mathsf{M}^{\psi,f}(t)=n\right\}$ and probability generating function.
\begin{theorem}\label{Th5}
   	 The process $\mathsf{M}^{\psi,f}$ has probability distribution function
   	 \begin{equation}\label{pnpsif}
   	          \tilde{p}_n^{\psi,f}(t)=\sum_{\Omega(k,n)}^{}\prod_{j=1}^{k}\frac{\lambda_j ^{x_j}}{x_j!}\left(-\partial_{\Lambda}\right)^{z_k}\tilde{\ell}_f(t,\psi (\Lambda)), \, n\ge 0,
    	\end{equation}
    	and $\tilde{p}_n^{\psi,f}(t)$ satisfy the following equation
    	\begin{eqnarray}
   		 \mathcal{D}^f_t \tilde{p}_n^{\psi,f}(t)=-\psi(\Lambda)\tilde{p}_n^{\psi,f}(t) -\sum_{m=1}^{n}\sum_{\Omega(k,m)}^{}\psi^{(z_k)}\left(\Lambda\right) \prod_{j=1}^{k}\frac{(-\lambda_j )^{x_j}}{x_j!}\tilde{p}_{n-m}^{\psi,f}(t),\label{Dtfpn}
  	\end{eqnarray}
   	 with initial conditions 
    \begin{equation*}
         \tilde{p}_n^{\psi,f}(0)=
         \begin{cases}
                1,\quad n=0,\\
                0,\quad n\geq1.\\
         \end{cases}
     \end{equation*}
        The probability generating function of the process $\mathsf{M}^{\psi,f}$ is of the form
\begin{equation}\label{Gpsif1}
             \tilde G^{\psi,f}(u,t)=\tilde \ell_f\Big(t, \psi\Big(\sum_{j=1}^{k}\lambda_j(1-u^j)\Big)\Big),\, |u|<1,
\end{equation}
and satisfies the equation
\begin{equation}\label{DftG}
        	\mathcal{D}^f_t \tilde G^{\psi,f}(u,t)=-\psi\Big(\sum_{j=1}^{k}\lambda_j(1-u^j)\Big)\tilde G^{\psi,f}(u,t)
    \end{equation}
    with $\tilde G^{\psi,f}(u,0)=1$.
\end{theorem}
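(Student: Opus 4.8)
The plan is to mirror the proof of Theorem \ref{t2}, now with the generalized counting process $M$ in place of the Poisson process with Bern\v{s}tein intertimes, using the operator representation \eqref{pnt31} of the GCP probabilities in place of $p_k(t)=\frac{(-\lambda\partial_\lambda)^k}{k!}e^{-\lambda t}$. For the distribution formula \eqref{pnpsif} I would condition on $Y^f$, writing $\tilde p_n^{\psi,f}(t)=\int_0^\infty\Prob\{M(s)=n\}\,\Prob\{H^\psi(Y^f(t))\in ds\}$ and substituting \eqref{pnt31}. The sum over $\Omega(k,n)$ and the operator $(-\partial_\Lambda)^{z_k}$ act on the parameter $\Lambda$ and hence commute with the $ds$-integration; the remaining integral is $\int_0^\infty e^{-\Lambda s}\Prob\{H^\psi(Y^f(t))\in ds\}=\E e^{-\Lambda H^\psi(Y^f(t))}$, which by the Laplace-exponent relation $\E[e^{-\Lambda H^\psi(v)}]=e^{-v\psi(\Lambda)}$ and \eqref{tilde_l} equals $\tilde\ell_f(t,\psi(\Lambda))$, giving \eqref{pnpsif}. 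The generating function \eqref{Gpsif1} comes from the same conditioning: $\tilde G^{\psi,f}(u,t)=\int_0^\infty\tilde G^\psi(u,s)\,\ell_f(t,s)\,ds$, where subordinating the GCP generating function $\tilde G(u,t)=\exp\{-\sum_{j}\lambda_j(1-u^j)t\}$ by $H^\psi$ yields $\tilde G^\psi(u,s)=\exp\{-s\,\psi(\sum_{j}\lambda_j(1-u^j))\}$, and the $s$-integral is again a Laplace transform of $Y^f$, producing $\tilde\ell_f(t,\psi(\sum_{j}\lambda_j(1-u^j)))$.

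Equation \eqref{DftG} is then immediate: by \eqref{lapl} the function $\tilde\ell_f(t,\lambda)$ is an eigenfunction of $\mathcal D^f_t$ with eigenvalue $-\lambda$, so specializing $\lambda=\psi(\sum_{j}\lambda_j(1-u^j))$ in \eqref{Gpsif1} gives \eqref{DftG} at once. For the probability equation \eqref{Dtfpn} I would run the argument of Theorem \ref{t2} verbatim: write $\tilde p_n^{\psi,f}(t)=\int_0^\infty\tilde p_n^\psi(v)\,\ell_f(t,v)\,dv$ with $\tilde p_n^\psi(v)=\Prob\{M(H^\psi(v))=n\}$, apply the R--L convolution derivative $\mathbb D^f_t$, use the density equation \eqref{dlf} in the form $\mathbb D^f_t\ell_f(t,v)=-\partial_v\ell_f(t,v)$, and integrate by parts in $v$. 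The boundary contribution is $\tilde p_n^\psi(0)\,\ell_f(t,0)=\tilde p_n^\psi(0)\,\nu(t)$ by \eqref{dlfin}, while the interior term is $\int_0^\infty\ell_f(t,v)\frac{d}{dv}\tilde p_n^\psi(v)\,dv$. Passing from $\mathbb D^f_t$ to $\mathcal D^f_t$ through their relation, together with $\tilde p_n^{\psi,f}(0)=\tilde p_n^\psi(0)$, cancels the $\nu(t)$ terms and leaves $\mathcal D^f_t\tilde p_n^{\psi,f}(t)=-\psi(\sum_{j=1}^k\lambda_j(I-B^j))\tilde p_n^{\psi,f}(t)$.

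The one input not already furnished by the earlier sections — and the step I expect to be the crux — is the governing equation for the intermediate process $M(H^\psi(v))$, namely $\frac{d}{dv}\tilde p_n^\psi(v)=-\psi(\sum_{j=1}^k\lambda_j(I-B^j))\tilde p_n^\psi(v)$, together with the identification of the operator $\psi(\sum_{j}\lambda_j(I-B^j))$ with the double sum on the right of \eqref{Dtfpn}. This is a Taylor-and-multinomial computation: writing $\sum_{j}\lambda_j(I-B^j)=\Lambda I-D$ with $D=\sum_{j}\lambda_j B^j$ and expanding $\psi(\Lambda I-D)=\sum_{r\ge0}\frac{\psi^{(r)}(\Lambda)}{r!}(-D)^r$, the multinomial theorem gives $\frac{1}{r!}(-D)^r=\sum_{x_1+\cdots+x_k=r}\prod_{j=1}^k\frac{(-\lambda_j)^{x_j}}{x_j!}\,B^{\sum_{j}jx_j}$; regrouping by the shift amount $m=\sum_{j}jx_j$, for which $r=z_k$, identifies the coefficient of $B^m$ with $\sum_{\Omega(k,m)}\psi^{(z_k)}(\Lambda)\prod_{j=1}^k\frac{(-\lambda_j)^{x_j}}{x_j!}$, which is exactly the bracketed coefficient in \eqref{Dtfpn}. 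Since $B^m\tilde p_n=\tilde p_{n-m}=0$ for $m>n$, only finitely many terms act on each fixed $n$, so the Bern\v{s}tein-function series terminates and no convergence issue arises. Equivalently — and perhaps cleaner to present — this difference equation may be read off by extracting the coefficient of $u^n$ in the generating-function identity $\partial_t\tilde G^\psi(u,t)=-\psi(\sum_{j}\lambda_j(1-u^j))\tilde G^\psi(u,t)$ for the GCP with Bern\v{s}tein intertimes, since multiplication by $u^j$ on the generating-function side corresponds to the shift $B^j$ on the coefficient sequence.
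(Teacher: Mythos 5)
Your proposal is correct and follows essentially the same route as the paper's proof: conditioning on $H^{\psi}(Y^f(t))$ together with the operator representation \eqref{pnt31} for the distribution formula, integration by parts against $\mathbb{D}^f_t\ell_f(t,v)=-\partial_v\ell_f(t,v)$ plus the C--D/R--L relation for the governing equation, and the eigenfunction property \eqref{lapl} for the generating function. The only divergence is at the step you correctly identify as the crux: the paper simply imports the governing equation of the intermediate process $M(H^{\psi}(t))$ from \cite{KK2022Arxiv}, whereas you derive it yourself via the multinomial expansion of $\psi\bigl(\Lambda I-\sum_{j}\lambda_j B^j\bigr)$ regrouped by the shift $m=\sum_j jx_j$ --- a correct, self-contained computation that is essentially the one behind the operator form the paper records only in Remark 3.
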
  

\begin{proof}
For calculating $\tilde{p}_n^{\psi, f}(t)$ we use the formula \eqref{pnt31}:
        \begin{eqnarray*}
                 \tilde{p}_n^{\psi, f}(t) &=& \int_{0 }^{\infty}\tilde{p}_n(s)P\left\{H^{\psi}(Y^f(t)) \in ds\right\}= \\
                 &=& \int_{0 }^{\infty}\sum_{\Omega(k,n)}\prod_{j=1}^{k}\frac{\lambda_j ^{x_j}}{x_j!}\left(-\partial _{\Lambda}\right)^{z_k}e^{-\Lambda t}P\left\{H^{\psi}(Y^f(t))\in ds\right\}=\\
                 &=& \sum_{\Omega(k,n)}\prod_{j=1}^{k}\frac{\lambda_j ^{x_j}}{x_j!}\E\left(-\partial _{\Lambda}\right)^{z_k}\exp\left\{-\Lambda H^{\psi}(Y^f(t))\right\}=\\
                 &=&\sum_{\Omega(k,n)}\prod_{j=1}^{k}\frac{\lambda_j ^{x_j}}{x_j!}\left(-\partial _{\Lambda}\right)^{z_k}\E \exp\left\{-\Lambda H^{\psi}(Y^f(t))\right\}.
        \end{eqnarray*}
        This implies formula \eqref{pnpsif}, since 
        \begin{equation*}
          \E \exp\left\{-\Lambda H^{\psi}(Y^f(t))\right\}=\E \exp\left\{-\psi(\Lambda) Y^f(t)\right\}=\tilde{\ell}_f(t,\psi (\Lambda)).
        \end{equation*}
        To derive equation \eqref{Dtfpn} we follow the same lines as is the proof of Theorem \ref{t2}, but we use now instead of the equation \eqref{dtpkf} (or \eqref{dtpkf1}) for probabilites of the process $N^{\psi}=N(H^{\psi}),$ the following equation for the probabilities $\tilde{p}_n^{\psi}(t)$ of the process $M^{\psi}=M(H^{\psi})$(see \cite{KK2022Arxiv}):
        \begin{equation*}
          \frac{d}{dt}\tilde{p}_n^{\psi}(t)= -\psi (\Lambda)\tilde{p}_n^{\psi}(t)-\sum_{m=1}^{n}\sum_{\Omega(k,m)}\psi^{z_k} (\Lambda)\prod_{j=1}^{k}\frac{(-\lambda_j)^{x_j}}{x_j!}\tilde{p}_{n-m}^{\psi}(t).
        \end{equation*}
              
        Thus, we obtain 
        \begin{equation*}
          {\mathbb{D}}_t^f\tilde{p}_n^{\psi,f}(t)=\psi(\Lambda)\tilde{p}_n^{\psi,f}(t)-\sum_{m=1}^{n}\sum_{\Omega(k,m)}\psi^{z_k} (\Lambda)\prod_{j=1}^{k}\frac{(-\lambda_j)^{x_j}}{x_j!}\tilde{p}_{n-m}^{\psi,f}(t)+\tilde{p}_{n}^{\psi,f}(0)\nu(t),
        \end{equation*}
        and then apply the same reasonings as those after formula \eqref{dtfpkpsif} to come to \eqref{Dtfpn}.

        Using the expression for the probability generating function $\tilde{G}^{\psi}(u,t)$ of the time-changed process $M^{\psi}(t)=M(H^{\psi}(t))$ (see  \cite{KK2022a}):
        \begin{equation*}
          \tilde{G}^{\psi}(u,t)=\exp\Big\{-t\psi \Big(\sum_{j=1}^{k}\lambda_j(1-u^j)\Big)\Big\},
        \end{equation*}
        we calculate $\tilde{G}^{\psi,f}$ as follows:
        \begin{eqnarray*}
          \tilde{G}^{\psi,f}(u,t) &=& \int_{0}^{\infty}\tilde{G}^{\psi}(u,s)\ell_f(t,s)ds=\\
           &=&\int_{0}^{\infty}\exp\Big\{-t\psi \Big(\sum_{j=1}^{k}\lambda_j(1-u^j)\Big)\Big\}\ell_f(t,s)ds=\\
          &=& \E e^{-\psi \left(\sum_{j=1}^{k}\lambda_j(1-u^j)\right)Y^f(t)}=\tilde{\ell}_f\Big(t,\psi\Big(\sum_{j=1}^{k}\lambda_j(1-u^j)\Big)\Big), 
        \end{eqnarray*}
        that is, \eqref{Gpsif1} holds. Equation \eqref{DftG} follows in view of  \eqref{Gpsif1} and \eqref{lapl}.
\end{proof}

\begin{remark}   {\rm Following the same calculation as in \cite[formula (5.3)]{KK2022Arxiv}, equation \eqref{Dtfpn} can also be written in the form
     \begin{equation*}
        \mathcal{D}^f_t \tilde{p}_n^{\psi,f}(t)
        =-\psi\Big(\Lambda\Big(I-\frac{1}{\Lambda}\sum_{j=1}^{n\wedge k}\lambda_jB^j\Big)\Big) \tilde{p}_{n-m}^{\psi,f}(t).
    \end{equation*}
    }    \end{remark}

\noindent {\bf Example 2.} As a continuation of Example 1, consider GCP with time-change given  by a compound Poisson-Gamma subordinator: $M^{GN}(t)=M(G_N(t))$. Probabilities of this process can be calculated and expressed in terms of special functions:
\begin{eqnarray*}
	\tilde p_{n}^{GN}(t) &=&\Prob\left\{M(G_{N}(t))=n\right\}=\int_{0}^{\infty
	}\Prob(M(s)=n)\Prob(G_{N}(t)\in ds) \\
	&=&\int_{0}^{\infty }\sum_{\Omega(k,n)}^{}\prod_{j=1}^{k} \frac{\left( \lambda _{j}s\right) ^{x_j}%
	}{x_j!}e^{-\Lambda s}
	\\
	&&\times \Big\{ e^{-\lambda t}\delta _{\{0\}}(ds)+e^{-\lambda t-\beta s}\frac{1%
	}{s}\Phi \left( \alpha ,0,\lambda t(\beta s)^{\alpha }\right) \Big\} ds \\
	&=&e^{-\lambda t}\int_{0}^{\infty }\sum_{\Omega(k,n)}^{}\prod_{j=1}^{k} \frac{\left( \lambda _{j}s\right) ^{x_j}%
	}{x_j!}e^{-\Lambda s}e^{-\beta s} \frac{1}{s}\Phi \left( \alpha ,0,\lambda t(\beta
	s)^{\alpha }\right) ds \\
	&=&e^{-\lambda t}\sum_{\Omega(k,n)}^{}\int_{0}^{\infty }\prod_{j=1}^{k} \frac{\left( \lambda _{j}s\right) ^{x_j}%
	}{x_j!} e^{-(\Lambda+\beta)s}\frac{1}{s}\sum_{l=1}^{\infty}\frac{(\lambda t \beta^{\alpha})^l}{l!\Gamma(\alpha l)}s^{\alpha l-1}ds\\
    &=&e^{-\lambda t}\sum_{\Omega(k,n)}^{}\prod_{j=1}^{k} \frac{ \lambda _{j} ^{x_j}%
    	}{x_j!} \sum_{l=1}^{\infty}\frac{(\lambda t \beta^{\alpha})^l}{l!\Gamma(\alpha l)}\int_{0}^{\infty }e^{-(\Lambda+\beta)s}s^{\sum_{j=1}^{k}x_j+\alpha l-1}ds\\
    &=&e^{-\lambda t}\sum_{\Omega(k,n)}^{}\prod_{j=1}^{k} \frac{ \lambda _{j} ^{x_j}%
    	}{x_j!} \sum_{l=1}^{\infty}\frac{(\lambda t \beta^{\alpha})^l}{l!\Gamma(\alpha l)}\frac{\Gamma\left(z_k+\alpha l\right)}{\left(\Lambda+\beta\right)^{z_k+\alpha l}}\\
    &=&e^{-\lambda t}\sum_{\Omega(k,n)}^{}\prod_{j=1}^{k} \frac{ \lambda _{j} ^{x_j}%
    	}{x_j!}\frac{1}{\left(\Lambda+\beta\right)^{z_k}}     \sum_{l=1}^{\infty}\frac{(\lambda t \beta^{\alpha})^l}{l! \left(\Lambda+\beta\right)^{\alpha l}} \frac{\Gamma\left(z_k+\alpha l\right)}{\Gamma(\alpha l)},
    	\end{eqnarray*}
    	which can be written in the form
 \begin{equation*}
\tilde p_{n}^{GN}(t)=e^{-\lambda t}\sum_{\Omega(k,n)}^{}\prod_{j=1}^{k} \frac{ \lambda _{j} ^{x_j}%
	}{x_j!}\frac{1}{\left(\Lambda+\beta\right)^{z_k}}\, {_1}\Psi_1\left((z_k,\alpha), (0,\alpha), \frac{\lambda t \beta^{\alpha}}{\left(\Lambda+\beta\right)^{\alpha }} \right),
\end{equation*}
where ${_p}\Psi_q$ is the generalized Wright function 
	\begin{equation*}
	_{p}\Psi_{q}((a_i, \alpha_i), (b_j, \beta_j), z)=\sum_{k=0}^{\infty
	}\frac{\prod\limits_{i = 1}^{p}\Gamma (a_i+\alpha_i
		k)}{\prod\limits_{j = 1}^{q}\Gamma (b_j+\beta_j k)} \frac{z^k}{k!}
	\end{equation*}
	 defined for $z\in C$, $a_i, b_i\in C$, $\alpha_i, \beta_i\in R$, $\alpha_i, \beta_i\neq 0$ and $\sum \alpha_i - \sum \beta_i > -1$ (see, e.g., \cite{HMS}).

If $\alpha=1$, that is, $G_N(t)=E_N(t)$, then probabilities become of the form
\begin{equation*}
	\tilde p_{n}^{E}(t)=e^{-\lambda t}\sum_{\Omega(k,n)}^{}\prod_{j=1}^{k} \frac{ \lambda _{j} ^{x_j} }{x_j!}\frac{1}{\left(\Lambda+\beta\right)^{z_k}}\mathcal{E}_{1,2}^{z_k+1} \left(\frac{\lambda t \beta}{\Lambda+\beta} \right),
\end{equation*}
where $\mathcal{E}_{\rho,\delta}^{\gamma}$ is the three-parameter generalized Mittag-Leffler function, which is  defined as follows:
\begin{equation*}
\mathcal{E}_{\rho ,\delta }^{\gamma }(z)=\sum_{k=0}^{\infty }\frac{\Gamma
	(\gamma +k)}{\Gamma (\gamma )}\frac{z^{k}}{k!\Gamma (\rho k+\delta )},\text{
}z\in C,\text{ }\rho ,\delta ,\gamma \in C,  \label{ML3}
\end{equation*}
with $\text{ Re}(\rho )>0,\text{Re}(\delta )>0,\text{Re}(\gamma )>0$ (see, e.g., \cite{HMS}).
For $n=0$ we have:
\begin{equation*}
        	 \tilde p_0^{GN}(t)
        =e^{-\lambda t}+e^{-\lambda t}\sum_{l=1}^{\infty}\frac{\left( \lambda t\beta^{\alpha } \right) ^{l}}{\left( \Lambda +\beta \right) ^{\alpha l}}\frac{\Gamma (\alpha l )}{%
		l!\Gamma (\alpha l)}
        =\exp\Big\{-\lambda t \Big(1-\frac{\beta^{\alpha}}{\left(\Lambda+\beta\right)^{\alpha}}\Big)\Big\},
    \end{equation*}
    and if $\alpha=1$
    \begin{equation*}
        	 \tilde p_0^{E}(t)=\exp\Big\{-\frac{\lambda\Lambda t}{\Lambda+\beta}\Big\}.
    \end{equation*}
   Consider now the process $M^{GN,f}(t)=M(G_N(Y^f(t))$, $t\ge0$. The next theorem follows from Theorem \ref{Th5}.
    \begin{theorem}\label{t6}
          The process $M^{GN,f}$ has probability distribution function given by formula \eqref{pnpsif} with $\psi=\psi_{GN}$ and the probabilities $\tilde{p}_n^{GN,f}(t)$ satisfy the following equation:
          \begin{eqnarray*}
            {\mathcal{D}}_t^f\tilde{p}_n^{GN,f}(t) &=& \left(\frac{\lambda \beta^{\alpha}}{\left(\Lambda+\beta\right)^{\alpha}}-\lambda\right)\tilde{p}_n^{GN,f}(t) +\\
                                        && +\frac{\left(\lambda\beta\right)^{\alpha}}{\left(\Lambda+\beta\right)^{\alpha}\Gamma(\alpha)}\sum_{m=1}^{n}\sum_{\Omega(k,m)}\frac{\Gamma\left(z_k+\alpha\right)}{\left(\Lambda+\beta\right)^{z_k}} \prod_{j=1}^{k}\frac{\lambda_j ^{x_j}}{x_j!}\tilde{p}_{n-m}^{GN,f}(t).
          \end{eqnarray*} 
            The process $M^{E,f}$has probability distribution function given by \eqref{pnpsif} with $\psi=\psi_{EN}$ and the probabilities $\tilde{p}_{n}^{E,f}(t)$ satisfy the equation
            \begin{eqnarray*}
                   {\mathcal{D}}_t^f\tilde{p}_n^{E,f}(t) &=& -\lambda\frac{\Lambda}{\Lambda+\beta}\tilde{p}_n^{E,f}(t) +\\
                                        && +\frac{\lambda\beta}{\Lambda+\beta}\sum_{m=1}^{n}\sum_{\Omega(k,m)}\frac{\Gamma\left(z_k+1\right)}{\left(\Lambda+\beta\right)^{z_k}}\prod_{j=1}^{k}\frac{\lambda_j ^{x_j}}{x_j!}\tilde{p}_{n-m}^{E,f}(t).
             \end{eqnarray*} 
    \end{theorem}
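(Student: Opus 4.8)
The plan is to obtain Theorem~\ref{t6} as a direct specialization of Theorem~\ref{Th5}: the first assertion is the case $\psi=\psi_{GN}$ and the second is the case $\psi=\psi_E$ (equivalently $\alpha=1$, since $\psi_{GN}$ reduces to $\psi_E$ there). The statement about the probability distribution function then needs no separate argument, being literally formula~\eqref{pnpsif} with $\psi$ replaced by $\psi_{GN}$. Consequently the whole content of the theorem reduces to inserting $\psi_{GN}$ into the governing equation~\eqref{Dtfpn} and carrying out the derivative computations that convert the abstract coefficients $\psi(\Lambda)$ and $\psi^{(z_k)}(\Lambda)$ into the explicit ones displayed in the theorem.

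First I would rewrite the Laplace exponent in the convenient form
\[
\psi_{GN}(u)=\lambda-\lambda\beta^{\alpha}(\beta+u)^{-\alpha},
\]
which makes both its value and its derivatives transparent. Evaluating at $u=\Lambda$ gives $\psi_{GN}(\Lambda)=\lambda-\lambda\beta^{\alpha}(\Lambda+\beta)^{-\alpha}$, so that $-\psi_{GN}(\Lambda)=\frac{\lambda\beta^{\alpha}}{(\Lambda+\beta)^{\alpha}}-\lambda$, which is exactly the coefficient of $\tilde p_n^{GN,f}(t)$ in the claimed equation. For the higher derivatives I would use
\[
\frac{d^{m}}{du^{m}}(\beta+u)^{-\alpha}=(-1)^{m}\frac{\Gamma(\alpha+m)}{\Gamma(\alpha)}(\beta+u)^{-\alpha-m},\qquad m\ge 1,
\]
whence $\psi_{GN}^{(z_k)}(\Lambda)=-\lambda\beta^{\alpha}(-1)^{z_k}\frac{\Gamma(\alpha+z_k)}{\Gamma(\alpha)}(\Lambda+\beta)^{-\alpha-z_k}$. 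It is worth noting that in the inner sum of~\eqref{Dtfpn} the index runs over $\Omega(k,m)$ with $m\ge 1$, which forces $z_k=\sum_j x_j\ge 1$; hence only this $m\ge 1$ formula is ever needed, and the $\psi_{GN}(\Lambda)$ term stays cleanly separate.

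The single point requiring care is the bookkeeping of signs. The summand in~\eqref{Dtfpn} is $-\psi_{GN}^{(z_k)}(\Lambda)\prod_{j=1}^{k}\frac{(-\lambda_j)^{x_j}}{x_j!}$, and I would track its four sign contributions: the explicit minus in front of the sum, the leading $-\lambda\beta^{\alpha}$ inside $\psi_{GN}^{(z_k)}$, the $(-1)^{z_k}$ produced by differentiating $(\beta+u)^{-\alpha}$, and the $(-1)^{z_k}=\prod_j(-1)^{x_j}$ coming from $\prod_j(-\lambda_j)^{x_j}$. The two powers of $(-1)^{z_k}$ multiply to $(-1)^{2z_k}=1$, while the two remaining minus signs cancel, so the summand is positive and equals
\[
\frac{\lambda\beta^{\alpha}}{(\Lambda+\beta)^{\alpha}}\,\frac{1}{\Gamma(\alpha)}\,\frac{\Gamma(z_k+\alpha)}{(\Lambda+\beta)^{z_k}}\prod_{j=1}^{k}\frac{\lambda_j^{x_j}}{x_j!}\,\tilde p_{n-m}^{GN,f}(t),
\]
which, summed over $m$ and $\Omega(k,m)$, reproduces the second term of the claimed equation.

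Finally, the equation for $M^{E,f}$ would follow by setting $\alpha=1$: then $\Gamma(\alpha)=1$ and $\Gamma(z_k+\alpha)=\Gamma(z_k+1)=z_k!$, the prefactor $\lambda\beta^{\alpha}$ becomes $\lambda\beta$, and $\psi_{GN}(\Lambda)$ collapses to $\psi_E(\Lambda)=\lambda\Lambda/(\Lambda+\beta)$, giving both coefficients of the second displayed equation. I do not anticipate any genuine obstacle here, since the result is a corollary of Theorem~\ref{Th5}; the only place demanding attention is the sign cancellation together with the correct Gamma-ratio form of the $z_k$-th derivative, as sketched above.
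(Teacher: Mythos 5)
Your proposal is correct and follows essentially the same route as the paper, which offers no written proof at all for this statement beyond the remark that it ``follows from Theorem \ref{Th5}''; you simply supply the substitution $\psi=\psi_{GN}$ into \eqref{Dtfpn} together with the derivative and sign bookkeeping that the paper leaves implicit. One small point: your (correct) computation yields the prefactor $\lambda\beta^{\alpha}/\bigl((\Lambda+\beta)^{\alpha}\Gamma(\alpha)\bigr)$, whereas the theorem as printed shows $(\lambda\beta)^{\alpha}/\bigl((\Lambda+\beta)^{\alpha}\Gamma(\alpha)\bigr)$; comparison with the analogous coefficient $\lambda\beta^{\alpha}/(\lambda_1+\beta)^{\alpha}$ in the earlier theorem for $N^{GN,f}$ confirms that the paper's displayed $(\lambda\beta)^{\alpha}$ is a typo and your version is the right one.
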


{\small
\baselineskip=0.9\baselineskip

}
\end{document}